\newcommand\C{\mathbb{C}}
\newcommand\Q{\mathbb{Q}}
\DeclarePairedDelimiter\abs{\lvert}{\rvert}%
\DeclareMathOperator{\Spec}{Spec}
\DeclareMathOperator{\Min}{Min}
\newcommand{\vast}{\bBigg@{4}}
\newcommand{\Vast}{\bBigg@{5}}
\newtheorem{theorem}{Theorem}[section]
\newtheorem{lemma}[theorem]{Lemma}
\newtheorem{proposition}[theorem]{Proposition}
\theoremstyle{definition}
\newtheorem{example}[theorem]{Example}
\newtheorem{remark}[theorem]{Remark}
\newtheorem{question}{Question}
\DeclareSymbolFont{yhlargesymbols}{OMX}{yhex}{m}{n}\DeclareMathAccent{\yhwidehat}{\mathord}{yhlargesymbols}{"62}
\definecolor{pastelgreen}{HTML}{a8e6cf}
\definecolor{pastelorange}{HTML}{ffd3b6}
\definecolor{pastelpink}{HTML}{ffaaa5}
\tikzstyle{main} = [circle, fill, inner sep=1.5pt]
\begin{document}

\title{Constructing Noncatenary Quasi-Excellent Precompletions}

\author[Ehrenworth]{Jackson Ehrenworth}
\address{Department of Mathematics and Statistics, Williams College, Williamstown, MA 01267}
\email{jacksonehrenworth@gmail.com}

\author[Loepp]{S. Loepp}
\address{Department of Mathematics and Statistics, Williams College, Williamstown, MA 01267}
\email{sloepp@williams.edu}

%\maketitle

\begin{abstract}
Let $T$ be a local (Noetherian) ring and let $Q_1$ and $Q_2$ be prime ideals of $T$. We find sufficient conditions for there to exist a quasi-excellent local subring $B$ of $T$ satisfying the following conditions: (1) the completion of $B$ at its maximal ideal is isomorphic to the completion of $T$ at its maximal ideal, (2) $B \cap Q_1 = B \cap Q_2$, (3) the set of prime ideals of $T/(Q_1 \cap Q_2)$ of positive height is the same as the set of prime ideals of $B/(B \cap Q_1)$ of positive height when viewed as partially ordered sets, and (4) for $i = 1$ and for $i = 2$, there is a coheight preserving bijection between the minimal prime ideals of $T_{Q_i}$ and the minimal prime ideals of $B_{B \cap Q_1}$. Intuitively, this means that $T$ contains a quasi-excellent local subring in which $Q_1$ and $Q_2$ are "glued together" and such that both the completion and desirable properties of the prime spectrum are preserved. 

We use this result to show that certain complete local rings are the completion of a quasi-excellent local ring whose prime spectrum, when viewed as a partially ordered set, contains interesting noncatenary finite subsets.
\end{abstract}

\maketitle

\section{Introduction}

Although completions of Noetherian rings are an important tool in commutative algebra, there are aspects of the relationship between a Noetherian ring and a completion of the ring that remain mysterious. For example, let $R$ be a local (Noetherian) ring and let $\widehat{R}$ be the completion of $R$ with respect to its maximal ideal. The relationship between the prime ideal structure of $R$ and the prime ideal structure of $\widehat{R}$ is not well understood. More specifically, the following question is open.

\begin{question}
\label{question: hard classication question}
Let $T$ be a complete local ring and let $X$ be a partially ordered set (poset). Under what conditions is there a local ring $R$ whose completion is $T$ such that the prime spectrum of $R$, when viewed as a partially ordered set with respect to inclusion, is isomorphic to $X$?
\end{question}

Answering Question \ref{question: hard classication question} would mean that, in some sense, we understand all possible relationships between the prime spectrum of a local ring and the prime spectrum of its completion. We believe that unfortunately, answering Question \ref{question: hard classication question} is very difficult. So, instead of asking what the possibilities there are for the entire prime spectrum of $R$, we ask what the possibilities are for \textit{finite parts} of the prime spectrum of $R$. In other words, we consider the following open question:

\begin{question} 
\label{question: easier classication question}
Let $T$ be a complete local ring and let $X$ be a {\it finite} partially ordered set. Under what conditions is there a local ring $R$ with completion $T$ satisfying the following condition: If $Y$ is the prime spectrum of $R$ when viewed as a partially ordered set with respect to inclusion, there is an injective poset map from $X$ to $Y$ that preserves saturated chains.
\end{question}

Question \ref{question: easier classication question} is particularly interesting when $X$ is not catenary (i.e. when $X$ contains two elements $x_1$ and $x_2$ satisfying the condition that $x_1 < x_2$ and there are two saturated chains of elements starting at $x_1$ and ending at $x_2$ of different lengths), as this would imply that the ring $R$ is not catenary. For this reason, we focus on the case where $X$ is not catenary. While we believe that Question \ref{question: easier classication question} is  difficult to answer in this case, some progress has been made.

In particular, in \cite{Gluing2021} the authors show that given a complete local ring $T$ satisfying certain conditions and given two minimal prime ideals $P_1$ and $P_2$ of $T$, there exists a local subring $R$ of $T$ such that the completion of $R$ at its maximal ideal is $T$, $R \cap P_1 = R \cap P_2$, and the entirety of the rest of the prime spectrum of $R$ is poset isomorphic to the rest of the prime spectrum of $T$ (see Theorem \ref{theorem: GluingTheorem}).  Intuitively, this result says that there exists a local subring $R$ of $T$ whose completion is $T$ and such that, as posets, the prime spectra of $R$ and $T$ are exactly the same with the exception that the two minimal prime ideals $P_1$ and $P_2$ of $T$ are ``glued together'' in $R$. This result can be used to answer Question \ref{question: easier classication question} for particular $T$ and $X$. For example, let $X$ be the poset $\{x_1,x_2,x_3,x_4, x_5\}$ with the relations $x_1 < x_2 < x_3 < x_4$ and $x_1 < x_5 < x_4$, and let $T = \mathbb{C}[[y_1,y_2,y_3,y_4]]/((y_1) \cap (y_2,y_3)).$ Then Theorem \ref{theorem: GluingTheorem} applies and provides a local subring $R$ of $T$ satisfing the conditions given in Question \ref{question: easier classication question} by ``gluing'' the minimal prime ideals of $T$ while maintaing the rest of the prime ideal structure. In fact, the $R$ obtained is quasi-excellent (See Proposition \ref{quasiexcellent}). $R$ cannot be excellent since, by definition, excellent rings are catenary, but $R$ is not catenary.

Theorem \ref{theorem: GluingTheorem} only works, however, when one wants to glue {\it minimal} prime ideals of $T$. In this paper, we consider whether it is possible to glue two prime ideals of $T$ of positive height such that both the completion and certain parts of the prime ideal structure are preserved.  We demonstrate via Theorem \ref{theorem: generalized gluing alt} that in certain cases one can build on the results given in \cite{Gluing2021} to achieve this. Theorem \ref{theorem: generalized gluing alt} can then be used, multiple times in fact, to answer Question \ref{question: easier classication question} for certain $T$ and $X$ where we previously did not know the answer (see Example \ref{mainexample} and Example \ref{secondexample}). Moreover, as is the case in \cite{Gluing2021}, the rings we produce in this setting are quasi-excellent. 

    The strategy, broadly, for proving our main theorem (Theorem \ref{theorem: generalized gluing alt}) is to start with a local ring $T$ satisfying certain very specific nice properties and two prime ideals $Q_1$ and $Q_2$ of $T$, also satisfying some properties. We demonstrate that we can glue $Q_1$ and $Q_2$ together to obtain a local subring $B$ of $T$ such that the completion of $B$ at its maximal ideal is isomorphic to the completion of $T$ at its maximal ideal and such that the prime spectrum of $B$ above $B \cap Q_1$ is the same as the prime spectrum of $T$ above $Q_1 \cap Q_2$. Moreover, the important parts of the prime spectrum that exist in $B$ below $B \cap Q_1$ are the same as the important parts of the prime spectrum of $T$ below both $Q_1$ and $Q_2$.  The ``nice properties'' that $T$ satisfies relate to the ability to ``return'' to $T$ from the quotient ring $T / (Q_1 \cap Q_2)$ by adjoining indeterminates and modding out by an ideal generated by $\Q$-linear combinations of monomials in those indeterminates.  It might seem slightly arbitrary to use $\Q$, but we will need all of our rings to contain a copy of the rationals to use the results of \cite{Gluing2021}.  The ring $T / (Q_1 \cap Q_2)$ has two minimal prime ideals.  We deploy the results of \cite{Gluing2021} to glue them together to obtain a local subring $R$ of $T / (Q_1 \cap Q_2)$, and then use the ability to ``return'' to $T$ from $T/(Q_1 \cap Q_2)$ to get from $R$ to a subring of $T$ that has the same completion as $T$ itself and has our desired properties.

In Section \ref{Prelims}, we present preliminaries, important theorems from the literature that we use for our proofs, and an illustrative example. Our main result is found in Section \ref{MainResult}.

\section{Preliminaries}\label{Prelims}

We begin this section by introducing notation and conventions we use for the remainder of the paper.  When we say a ring is local, we mean that it is Noetherian and it has exactly one maximal ideal.  The notation $(B,M)$ is used for a local ring $B$ whose maximal ideal is $M$. If $(B,M)$ is a local ring, then $\widehat{B}$ is the notation used for the completion of $B$ with respect to $M$. If $(B,M)$ is a local ring and $\widehat{B} \cong T$, we say that $B$ is a precompletion of $T$. We denote the set of prime ideals of a ring $B$ by $\Spec(B)$ and the set of minimal prime ideals of $B$ by $\Min(B)$. If $B$ is a ring and $P$ is a prime ideal of $B$, then the coheight of $P$ in $B$ means the Krull dimension of the ring $B/P$. Finally, if $S$ is a subring of the ring $B$ and $Q$ and $P$ are prime ideals of $B$ satisfying $S \cap Q = S \cap P$, then we say that $Q$ and $P$ are glued together in $S$.

The following theorem is the main result from \cite{Gluing2021}. As mentioned in the previous section, it will be used heavily in our proofs.

\begin{theorem}[\cite{Gluing2021}, Theorem 2.14]\label{theorem: GluingTheorem}
Let $(B,M)$ be a reduced local ring containing the rationals with $B/M$ uncountable and $|B| = |B/M|$.  
%Suppose $B$ contains the rationals and $\Min(B) = \{Q_1, Q_2, \ldots ,Q_n\}$ with $n \geq 2$, and 
Suppose $\Min(B)$ is partitioned into $m \geq 1$ subcollections $C_1, \ldots ,C_m$. Then there is a reduced local ring $S \subseteq B$ with maximal ideal $S \cap M$ such that 
\begin{enumerate}
\item $S$ contains the rationals, 
\item $\widehat{S} = \widehat{B}$, 
\item $S/(S \cap M)$ is uncountable and $|S| = |S/(S \cap M)|$, 
\item For $Q, Q' \in \Min(B)$, $Q \cap S = Q' \cap S$ if and only if there is an $i \in \{1,2, \ldots ,m\}$ with $Q \in C_i$ and $Q' \in C_i$, 
\item The map $f:\Spec(B) \longrightarrow \Spec(S)$ given by $f(P) = S \cap P$ is onto and, if $P$ is a prime ideal of $B$ with positive height, then $f(P)B = P$.  In particular, if $P$ and $P'$ are prime ideals of $B$ with positive height, then $f(P)$ has positive height and $f(P) = f(P')$ implies that $P = P'$. 
%\item $\{P \in \Spec(B) \, | \, P \not\in \Min(B)\}$ and $\{P \in \Spec(S) \, | \, P \not\in \Min(S)\}$ are isomorphic as posets.
\end{enumerate}
\end{theorem}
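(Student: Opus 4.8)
The plan is to build $S$ as a subring of $T := \widehat{B}$ by a transfinite approximation argument of Heitmann--Loepp type. Since $B$ is reduced Noetherian local, $B \hookrightarrow T$, and since $B \to T$ is faithfully flat with $T/\mathfrak{m}^2$ of finite length (where $\mathfrak m$ denotes the maximal ideal of $T$), the map $B \to T/\mathfrak{m}^2$ is already surjective; hence it suffices to build $S$ \emph{inside} $B$. Restating the goal, I want a quasi-local subring $(S, S\cap\mathfrak m)\subseteq B$ with $\Q\subseteq S$ such that: (a) $S$ surjects onto $T/\mathfrak m^2$ (equivalently, onto the residue field $T/\mathfrak m$); (b) $IT\cap S = I$ for every finitely generated ideal $I$ of $S$, which (with $T$ Noetherian) makes $S$ Noetherian and $S\to T$ faithfully flat; (c) for $Q,Q'\in\Min(B)$ one has $Q\cap S = Q'\cap S$ if and only if $Q$ and $Q'$ lie in a common block $C_i$; and (d) for every positive-height prime $P$ of $B$ the ideal $P$ is generated by $P\cap S$. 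Conditions (a) and (b) are the standard completion criterion: faithful flatness of $S\to T$ plus $T$ Noetherian forces $S$ Noetherian, and then (a) upgrades this to $\widehat S\cong T$; together with a cardinality count this yields (1), (2), (3). Condition (c) is exactly (4), and (d) yields (5), whose final clause is a formal consequence of faithful flatness.

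I would produce $S$ as the union of an ascending chain $\Q = R_0\subseteq R_1\subseteq\cdots\subseteq R_\alpha\subseteq\cdots$ of ``nice'' subrings of $B$, indexed by ordinals $\alpha<\lambda := |B/M| = |T/\mathfrak m|$, with unions taken at limit stages. Here ``nice'' bundles several invariants: $R_\alpha$ is quasi-local with maximal ideal $R_\alpha\cap\mathfrak m$; $|R_\alpha|<\lambda$; $\Q\subseteq R_\alpha$; the contraction-compatibility that $\{\,Q\cap R_\alpha : Q\in C_i\,\}$ is a single prime $\mathfrak p_i$ for each $i$, and $\{\mathfrak p_1,\dots,\mathfrak p_m\} = \Min(R_\alpha)$; and a technical control on the associated primes of $T$ viewed as an $R_\alpha$-module (the ``formal fiber'' condition) designed so that it both passes to unions and survives the enlargement step, and so that its limiting form delivers (b). Along the recursion I would process $\lambda$-many tasks, one per successor stage: for each $u\in T$, force $\bar u$ into the image of $S\to T/\mathfrak m$, giving (a); for the relevant finitely generated ideals, force the flatness equality, contributing to (b); for each $Q\in\Min(B)$ and each pair of blocks, insert an element of $B$ separating the corresponding contractions, giving the nontrivial direction of (c) (the other direction being built into ``nice''); and for each positive-height prime $P$ of $B$, adjoin lifts of a finite generating set of $PT$, forcing $P = (P\cap S)B$ and hence (d).

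The engine is an \emph{adjunction lemma}: given a nice $R_\alpha$ with $|R_\alpha|<\lambda$ and finitely many ``forbidden'' cosets of proper ideals (arising from the finite list of primes a given task requires us to avoid in order to preserve quasi-locality, the contraction-compatibility, and the formal-fiber condition), there is an element $x\in B$ avoiding all of them such that an appropriate localization of $R_\alpha[x]$ is again nice. This is exactly where the hypotheses ``$B/M$ uncountable'' and ``$|B| = |B/M|$'' are used: each forbidden set is a union of fewer than $\lambda$ cosets of proper ideals, so a prime-avoidance/counting argument over the large residue field supplies the required $x$, while adjoining only boundedly many elements keeps $|R_{\alpha+1}|<\lambda$. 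Having run through all tasks, one checks (1)--(5) for $S = \bigcup_{\alpha<\lambda} R_\alpha$: quasi-locality with $S\cap M$ maximal and $|S| = \lambda = |S/(S\cap M)|$ are immediate; $\widehat S = T$ (hence (2), with $S$ reduced because $S\subseteq B$) follows from (a), (b) and the completion criterion; (4) from the separation tasks; and (5)---surjectivity of $f$, together with $f(P)B = P$ for $\hgt P>0$---from the generating-set tasks plus faithful flatness of $B\to T$ and of $S\to T$.

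I expect the genuine obstacle to be pinning down the formal-fiber invariant on the $R_\alpha$'s precisely: it must be weak enough that the adjunction lemma always leaves room to pick $x$ avoiding only finitely many proper cosets, yet strong enough that in the limit it gives $IT\cap S = I$ for all finitely generated $I$; and it must coexist with the requirement that the minimal primes of $B$ within a block be glued, which is the new ingredient relative to the single-minimal-prime (domain/UFD) prototypes for which this method was developed. Verifying that after gluing two or more minimal primes the completion of the resulting $S$ is still $T$, and that the spectral map $f$ remains onto with positive-height ideals extending back correctly, is the substance of the argument.
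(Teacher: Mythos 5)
This paper does not prove Theorem~\ref{theorem: GluingTheorem}; it is quoted from Colbert--Loepp~\cite{Gluing2021}, so the comparison must be against that paper's argument. Your outline matches the Colbert--Loepp construction in all essentials: a transfinite chain $\Q = R_0 \subseteq R_1 \subseteq \cdots$ of quasi-local subrings of $B$ of cardinality $<|B/M|$, an inductively maintained ``nice'' condition bundling quasi-locality with maximal ideal $R_\alpha\cap M$, the block-contraction invariant on $\Min(B)$, and a formal-fiber control, an adjunction lemma that produces the next stage by adjoining one element and localizing while avoiding finitely many bad cosets (which is where $B/M$ uncountable and $|B|=|B/M|$ enter, via a cardinality/prime-avoidance count), and a task list of size $|B/M|$ forcing surjection onto $B/M^2$, the flatness equality $IB\cap S=I$, block separation, and $P=(P\cap S)B$ for every positive-height $P$. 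Heitmann's completion criterion then yields (2), and the remaining conclusions fall out as you describe. The piece you correctly flag as the real work --- the precise formal-fiber invariant, and the verification that the adjunction lemma preserves it \emph{and} the contraction-compatibility (so that adjoining a separating element for one block pair, or a generator of some $P$, does not inadvertently split a contraction that must stay glued) --- is exactly where the bulk of the Colbert--Loepp paper lives; your sketch names it but does not close it, which is acceptable for a blind outline but is where a full writeup would need to do serious work. Two small slips: you should adjoin generators of $P$ itself (an ideal of $B$), not of $P\widehat{B}$, to get $P=(P\cap S)B$; and the invariant $\{\mathfrak p_1,\dots,\mathfrak p_m\}=\Min(R_\alpha)$ must be allowed to degenerate (have fewer than $m$ elements) at early stages since $R_0=\Q$ is a domain, with distinctness achieved only in the limit via the separation tasks.
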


The conclusions of Theorem \ref{theorem: GluingTheorem} imply that, when viewed as partially ordered sets, the prime spectrum of $B$ and the prime spectrum of $S$ are the same except that some of the minimal prime ideals of $B$ are glued together in $S$.

As is indicated in the following proposition, the ring $S$ obtained in Theorem \ref{theorem: GluingTheorem} is quasi-excellent as long as the ring $B$ is quasi-excellent.

\begin{proposition}[\cite{UFDSpecs}, Proposition 3.6]\label{quasiexcellent}
Suppose the ring $B$ in Theorem \ref{theorem: GluingTheorem} satisfies the additional condition that it is quasi-excellent. Then the ring $S$ constructed in Theorem \ref{theorem: GluingTheorem} is also quasi-excellent.
\end{proposition}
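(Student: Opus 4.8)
The plan is to verify quasi-excellence of $S$ directly from the formal-fiber criterion, transferring the corresponding information about $B$ along the map $f$ of part~(5) of Theorem~\ref{theorem: GluingTheorem}. Write $T = \widehat{S} = \widehat{B}$. Recall that a local ring containing $\Q$ is quasi-excellent if and only if all of its formal fibers are regular (in characteristic zero, ``geometrically regular'' coincides with ``regular''); equivalently, $S$ is quasi-excellent if and only if for every $\mathfrak{p} \in \Spec(S)$ and every $Q \in \Spec(T)$ with $Q \cap S = \mathfrak{p}$, the local ring $T_{Q}/\mathfrak{p}T_{Q}$ is regular. Since $B$ is quasi-excellent, the same criterion holds for $B$: for every $\mathfrak{q} \in \Spec(B)$ and every $Q \in \Spec(T)$ with $Q \cap B = \mathfrak{q}$, the ring $T_{Q}/\mathfrak{q}T_{Q}$ is regular; since $B$ is also reduced, $T$ is reduced.

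So I would fix $\mathfrak{p} \in \Spec(S)$ and $Q \in \Spec(T)$ with $Q \cap S = \mathfrak{p}$, set $\mathfrak{q} = Q \cap B$ (so that $\mathfrak{q} \cap S = \mathfrak{p}$ and $T_{Q}/\mathfrak{q}T_{Q}$ is regular), and split into cases according to $\hgt_{B}\mathfrak{q}$. If $\hgt_{B}\mathfrak{q} > 0$, then part~(5) of Theorem~\ref{theorem: GluingTheorem} gives $\mathfrak{q} = f(\mathfrak{q})B = \mathfrak{p}B$, hence $\mathfrak{p}T_{Q} = \mathfrak{q}T_{Q}$ and $T_{Q}/\mathfrak{p}T_{Q} = T_{Q}/\mathfrak{q}T_{Q}$ is regular, as required. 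The remaining case is $\mathfrak{q} \in \Min(B)$; using faithful flatness of $S \to T$ (lift $Q$ down to a minimal prime of $T$, then contract to $B$ and to $S$) one checks that this case occurs exactly when $\mathfrak{p} \in \Min(S)$.

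In the case $\mathfrak{q} \in \Min(B)$, I would first observe that $T_{Q}$ has a unique minimal prime: any minimal prime $Q_{0}$ of $T$ contained in $Q$ satisfies $Q_{0} \cap B \subseteq \mathfrak{q}$, hence $Q_{0} \cap B = \mathfrak{q}$ since $\mathfrak{q}$ is minimal in $B$, so $\mathfrak{q}T_{Q} \subseteq Q_{0}T_{Q}$; but $T_{Q}/\mathfrak{q}T_{Q}$ is regular, hence a domain, so $\mathfrak{q}T_{Q}$ is prime and therefore equals the minimal prime $Q_{0}T_{Q}$. Since $\mathfrak{p} \subseteq \mathfrak{q}$, this yields $\sqrt{\mathfrak{p}T_{Q}} = \mathfrak{q}T_{Q}$, so it suffices to prove that $T_{Q}/\mathfrak{p}T_{Q}$ is reduced: then $\mathfrak{p}T_{Q} = \sqrt{\mathfrak{p}T_{Q}} = \mathfrak{q}T_{Q}$ and $T_{Q}/\mathfrak{p}T_{Q}$ is regular. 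To obtain reducedness, I would show that $\mathfrak{p}B$ is a radical ideal of $B$ --- in fact that it equals the intersection of the minimal primes of $B$ lying over $\mathfrak{p}$, i.e., of the partition class attached to $\mathfrak{p}$ in part~(4) of Theorem~\ref{theorem: GluingTheorem} --- using that $S$ is reduced, that $f$ is surjective, and the positive-height clause of part~(5). Granting this, $B/\mathfrak{p}B$ is reduced; since $B$ is quasi-excellent, so is $B/\mathfrak{p}B$, hence its completion $T/\mathfrak{p}T = \widehat{B/\mathfrak{p}B}$ is reduced, and therefore so is its localization $T_{Q}/\mathfrak{p}T_{Q}$.

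I expect the main obstacle to be precisely this last step: proving that $\mathfrak{p}B$ is radical in $B$ when $\mathfrak{p}$ is a minimal prime of $S$ (equivalently, that the formal fibers of $S$ over its minimal primes are reduced). Part~(5) of Theorem~\ref{theorem: GluingTheorem} controls primes of positive height but says nothing directly about height-zero primes, and $B$ need not be flat over $S$, so this has to be extracted from the internal structure of the gluing construction --- in particular from the precise description, in part~(4) of Theorem~\ref{theorem: GluingTheorem}, of which minimal primes of $B$ become identified in $S$. The remaining steps are routine manipulations with localization and faithful flatness.
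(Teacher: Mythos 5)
The paper does not prove Proposition~\ref{quasiexcellent}; it is cited directly from \cite{UFDSpecs}, Proposition~3.6, so there is no in-paper argument to compare against. I can only assess your proposal on its own terms.

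Your reduction is correct as far as it goes, and you have correctly located the hard point. Passing to the formal-fiber criterion is legitimate (in characteristic zero geometric regularity of the fibers reduces to regularity, the $G$-ring property of a local ring can be checked on $S \to \widehat{S}$ alone, and a semi-local $G$-ring is automatically $J$-$2$). The positive-height case is handled cleanly: part~(5) of Theorem~\ref{theorem: GluingTheorem} gives $\mathfrak{p}B = f(\mathfrak{q})B = \mathfrak{q}$, so $T_Q/\mathfrak{p}T_Q = T_Q/\mathfrak{q}T_Q$ is regular by quasi-excellence of $B$. And in the minimal case, the identification of $\mathfrak{q}T_Q$ as the unique minimal prime of $T_Q$, the conclusion $\sqrt{\mathfrak{p}T_Q} = \mathfrak{q}T_Q$, and the reduction of regularity of $T_Q/\mathfrak{p}T_Q$ to reducedness of $T/\mathfrak{p}T$ (hence to $\mathfrak{p}B$ being radical) are all sound.

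The last step is, however, a genuine gap and not a routine one, exactly as you flag. Parts~(1)--(5) of Theorem~\ref{theorem: GluingTheorem} give only the trivial inclusion $\mathfrak{p}B \subseteq \bigcap\{\,Q \in \Min(B) : Q \cap S = \mathfrak{p}\,\}$ and say nothing that rules out a strict inclusion, i.e.\ nilpotents in $B/\mathfrak{p}B$. The implication also cannot be sidestepped: since the completion map $B/\mathfrak{p}B \hookrightarrow T/\mathfrak{p}T$ is injective, reducedness of $T/\mathfrak{p}T$ already forces $\mathfrak{p}B$ radical, so ``$\mathfrak{p}B$ radical'' is not a stronger auxiliary claim but literally equivalent (given quasi-excellence of $B$) to what you need. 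Closing it therefore requires information about the internal construction of $S$ in \cite{Gluing2021} --- roughly, how the transfinite chain of subrings is assembled so that the generic formal fibers over the new minimal primes stay reduced --- which the statement of Theorem~\ref{theorem: GluingTheorem} does not expose. So what you have is a correct and tight reduction to a nontrivial lemma about the gluing construction, not a complete proof, and that is the content the cited Proposition~3.6 of \cite{UFDSpecs} must be supplying.
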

    
Before we begin our construction, we present an example to build intuition for how our construction works. Given a specific complete local ring $T$ and two prime ideals $Q_1$ and $Q_2$ of $T$, we build a precompletion of $T$ where $Q_1$ and $Q_2$ are glued together and where the prime spectra of the two rings are related in a desired way.

    \begin{example}\label{mainexample}
        Let
        $T = \C[[x_1, \ldots, x_8]]/J$ where $$J = (x_1, x_5) \cap (x_1, x_6, x_7) \cap (x_2, x_3, x_5) \cap (x_2, x_3, x_6, x_7),$$ and let $Q_1 = (x_1, x_5, x_6, x_7, x_8)/J$ and $Q_2 = (x_2, x_3, x_5, x_6, x_7, x_8)/J$. Then $T$ is a complete local ring with a part of its prime spectrum pictured as a partially ordered set in Figure 1.  Suppose we want to construct a local subring $(B, B \cap M)$ of $T$ such that $\widehat{B} \cong T$ and such that $B \cap Q_1 = B \cap Q_2$. Moreover, the set of prime ideals of $T/(Q_1 \cap Q_2)$ of positive height and the set of prime ideals of $B/(B \cap Q_1) = B/(B \cap Q_2)$ of positive height are isomorphic as posets. In other words, the prime spectrum of $T$ above $Q_1$ and $Q_2$ and the prime spectrum of $B$ above $B \cap Q_1 = B \cap Q_2$ are isomorphic as posets. Part of the prime spectrum of such a ring $B$ is pictured in Figure 2. 
        \begin{figure}[h!]\label{SpecT}
            \centering
            \begin{minipage}[t]{0.55\textwidth}
                \vspace{0pt}
                \centering
                \begin{forest}
                    for tree={
                        edge+={thick},
                        inner sep=1.25pt,
                        circle, draw, fill,
                        l = 12mm,
                        s sep = 12mm
                    }
                    [, alias=M
                        [, alias=L1
                            [, alias=L2
                                [, alias=L3
                                    [, alias=LL1
                                        [, alias=LL2
                                            [, alias=LL3]
                                        ]
                                    ]
                                    [, alias=LR1
                                        [, alias=LR2]
                                    ]
                                ]
                            ]
                        ]
                        [, alias=R2
                            [, alias=R3
                                [, alias=RL1
                                    [, alias=RL2
                                        [, alias=RL3]
                                    ]
                                ]
                                [, alias=RR1
                                    [, alias=RR2]
                                ]
                            ]
                        ]
                    ]
                    \fill [blue, opacity=0.15, rounded corners] (-2.5, -4.23) rectangle (-1.0, -2.88);
                    \fill [blue, opacity=0.15, rounded corners] (1.0, -3.0) rectangle (2.5, -1.75);
                    \node at (-2, -3.5) {$Q_1$};
                    \node at (2, -2.25) {$Q_2$};
                \end{forest}
                \caption{Partial $\Spec (T)$}
            \end{minipage}
            \begin{minipage}[t]{0.44\textwidth}
                \vspace{25pt}
                \centering
                \begin{tikzpicture}
                    \node (M)[circle, fill, inner sep=1.5pt] {};
                    \node (L1)[circle, fill, inner sep=1.5pt] [below left of=M] {};
                    \node (L2)[circle, fill, inner sep=1.5pt] [below of=L1] {};
                    \node (L3)[circle, fill, inner sep=1.5pt] [below right of=L2] {};
                    \node (R1)[circle, fill, inner sep=1.5pt] [below right of=M, yshift=-0.5cm] {};
                    \node (LL1)[circle, fill, inner sep=1.5pt] [below left of=L3] {};
                    \node (LL2)[circle, fill, inner sep=1.5pt] [below of=LL1] {};
                    \node (LL3)[circle, fill, inner sep=1.5pt] [below of=LL2] {};
                    \node (LR1)[circle, fill, inner sep=1.5pt, yshift=0.5cm] [below right of=L3, yshift=-0.5cm] {};
                    \node (LR2)[circle, fill, inner sep=1.5pt, yshift=0.5cm] [below of=LR1, yshift=-0.5cm] {};
    
                    \draw[line width=0.25mm] (M) -- (L1) -- (L2) -- (L3) -- (LL1) -- (LL2) -- (LL3);
                    \draw[line width=0.25mm] (M) -- (R1) -- (L3) -- (LR1) -- (LR2);
                    % \draw[line width=0.25mm] (M) -- (R1) -- (L3);
                \end{tikzpicture}
                \caption{Partial $\Spec (B)$}
            \end{minipage}
        \end{figure}

        \noindent To find such a ring $B$, first define
        \begin{equation*}
            \overline{T} = \frac{T}{Q_1 \cap Q_2} \cong \frac{\C[[x_1, x_2,x_3, x_4]]}{(x_1) \cap (x_2, x_3)}
        \end{equation*}
        and note that
        \begin{equation*}
            \frac{\overline{T}[[y_1, y_2,y_3, y_4]]}{(y_1) \cap (y_2, y_3)} \cong \frac{\C[[x_1, x_2,x_3, x_4, y_1, y_2,y_3, y_4]]}{(x_1, y_1) \cap (x_1, y_2, y_3) \cap (x_2, x_3, y_1) \cap (x_2, x_3, y_2, y_3)},
        \end{equation*}
        which is naturally isomorphic to $T$ by mapping $y_i \to x_{i+4}$ for $i = 1,2,3,4$. Denote the minimal prime ideals of $\overline{T}$ by $\overline{Q}_1 = (x_1)/((x_1) \cap (x_2,x_3))$ and $\overline{Q}_2 = (x_2,x_3)/((x_1) \cap (x_2,x_3))$. Use Theorem \ref{theorem: GluingTheorem} to find a local subring $R$ of $\overline{T}$ such that $\widehat{R} \cong \overline{T}$, $R \cap \overline{Q}_1 = R \cap \overline{Q}_2 = (0)$, and the parts of the prime spectra of $\overline{T}$ and $R$ of positive height are isomorphic as posets. Then setting $B = R[[y_1, y_2,y_3, y_4]]/((y_1) \cap (y_2, y_3))$, we have
        \begin{align*}
            \widehat{B} &= \widehat{\frac{R[[y_1, y_2,y_3, y_4]]}{(y_1) \cap (y_2, y_3)}}\\
            &\cong \frac{\widehat{R}[[y_1, y_2,y_3, y_4]]}{(y_1) \cap (y_2, y_3)}\\
            &\cong T.
        \end{align*}
    Now, letting $\overline{Q}^e_1$ denote the ideal $\overline{Q}_1\overline{T}[[y_1,y_2,y_3,y_4]]$ and $\overline{Q}^e_2$ denote the ideal $\overline{Q}_2\overline{T}[[y_1,y_2,y_3,y_4]]$, we have $$R[[y_1, y_2,y_3 ,y_4]] \cap (\overline{Q}^e_1,y_1, y_2,y_3 ,y_4) = R[[y_1, y_2,y_3 ,y_4]] \cap (\overline{Q}^e_2,y_1, y_2,y_3 ,y_4) = (y_1, y_2,y_3 ,y_4),$$ and it follows that $B \cap Q_1 = B \cap Q_2$. Finally, $T/(Q_1 \cap Q_2) \cong \C[[x_1, x_2,x_3 ,x_4]]/((x_1) \cap (x_2,x_3)) = \overline{T}$ and $B/(B \cap Q_1) = B/(B \cap Q_2) \cong R.$ Therefore, the set of prime ideals of $T/(Q_1 \cap Q_2)$ of positive height and the set of prime ideals of $B/(B \cap Q_1) = B/(B \cap Q_2)$ of positive height are isomorphic as posets. 
   %     We show that, by the way $R$ was constructed, $B$ is our desired subring of $T$. \textcolor{red}{Put more here to explain why the other properties hold!!}
    %    Then there does exist a precompletion $(B, B \cap M)$ of $T$ with the partial prime spectrum envisioned through the Hasse diagram on the top right.
    Note that, since $T$ is excellent, $R$ is quasi-excellent by Proposition \ref{quasiexcellent}. Since power series rings over a quasi-excellent ring and quotient rings of a quasi-excellent ring are also quasi-excellent, the ring $B$ is quasi-excellent.
    \end{example}

    Intuitively, this will be our approach: to glue together two prime ideals of $T$, call them $Q_1$ and $Q_2$, we consider $\overline{T} = T/(Q_1 \cap Q_2)$ and use Theorem \ref{theorem: GluingTheorem} to find a subring of $\overline{T}$ where the minimal prime ideals of $\overline{T}$ are glued together. We then find a way to \textit{return} to $T$ from $\overline{T}$ by adjoining indeterminates and then modding out by an ideal, and then demonstrate that there is a precompletion $B$ of $T$ with $Q_1$ and $Q_2$ glued together. In addition, we will argue that, as is true for Example \ref{mainexample}, there is a coheight preserving bijection between the minimal prime ideals of $T_{Q_i}$ and the minimal prime ideals of $B_{B \cap Q_1}$ for $i = 1$ and $i = 2$.

Note that the ring $B$ from Example \ref{mainexample} satisfies the conditions of Theorem \ref{theorem: GluingTheorem}, and so we can apply it to $B$ to obtain a local subring of $B$ where the two minimal prime ideals of $B$ are glued together. The prime spectrum of the resulting ring will contain a poset similar to Figure 2 except that the two minimal elements are glued together. So, for this $T$ we have answered Question \ref{question: easier classication question} for two different partially ordered sets; the one pictured in Figure 2 and the one obtained from Figure 2 by gluing the two minimal elements.

We end this section with a useful proposition from \cite{Ostermeyer}.

\begin{proposition}
        [\cite{Ostermeyer}, Lemma 3.2]\label{proposition: folklore going down}
        Let $(T, M)$ be a local ring and suppose $(B, B \cap M)$ is a local subring of $T$ such that $\widehat{T} \cong \widehat{B} $.  Then the going down property holds between $T$ and $B$.
    \end{proposition}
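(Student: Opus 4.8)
The plan is to use $\widehat{T}$ as an intermediary, exploiting that it is faithfully flat over both $B$ and $T$. Since the inclusion $B \hookrightarrow T$ induces the isomorphism $\widehat{B} \cong \widehat{T}$, and since a Noetherian local ring injects into its completion, we may regard $B \subseteq T \subseteq \widehat{T}$ with $\widehat{T}$ simultaneously serving as the completion of $T$ and (via the given isomorphism) the completion of $B$; in particular the composite $B \hookrightarrow T \hookrightarrow \widehat{T}$ is the canonical completion map of $B$. Both $T \to \widehat{T}$ and $B \to \widehat{T}$ are then local homomorphisms that are flat (completions of Noetherian local rings are flat), hence faithfully flat. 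To verify going down between $B$ and $T$ we must take primes $P_0 \subseteq P_1$ of $B$ and a prime $Q_1$ of $T$ with $Q_1 \cap B = P_1$, and produce a prime $Q_0 \subseteq Q_1$ of $T$ with $Q_0 \cap B = P_0$.

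The steps would be as follows. First, because $\widehat{T}$ is faithfully flat over $T$, the induced map $\Spec \widehat{T} \to \Spec T$ is surjective with nonempty fibers, so we may choose a prime $\mathfrak{q}_1$ of $\widehat{T}$ with $\mathfrak{q}_1 \cap T = Q_1$; contracting further to $B$ gives $\mathfrak{q}_1 \cap B = (\mathfrak{q}_1 \cap T) \cap B = P_1$. Second, because $\widehat{T} = \widehat{B}$ is flat over $B$, the going-down theorem for flat extensions applies to the map $B \hookrightarrow \widehat{T}$ (localize at $\mathfrak{q}_1$ to get a faithfully flat local map $B_{P_1} \to \widehat{T}_{\mathfrak{q}_1}$ and use surjectivity of $\Spec$); applied to $P_0 \subseteq P_1 = \mathfrak{q}_1 \cap B$ it yields a prime $\mathfrak{q}_0 \subseteq \mathfrak{q}_1$ of $\widehat{T}$ with $\mathfrak{q}_0 \cap B = P_0$. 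Third, set $Q_0 := \mathfrak{q}_0 \cap T$; then $Q_0 \subseteq \mathfrak{q}_1 \cap T = Q_1$ and $Q_0 \cap B = (\mathfrak{q}_0 \cap T) \cap B = \mathfrak{q}_0 \cap B = P_0$, which is exactly the required prime.

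I expect the argument itself to be essentially formal once the setup is pinned down; the point that requires care is the compatibility in the very first paragraph, namely that the isomorphism $\widehat{B} \cong \widehat{T}$ is the one induced by the inclusion $B \hookrightarrow T$, so that $B$, $T$, and their common completion genuinely sit inside $\widehat{T}$ in a compatible way and contraction of primes behaves transitively along $B \subseteq T \subseteq \widehat{T}$. The only other ingredients are standard: a completion of a Noetherian local ring is faithfully flat (flat plus a local homomorphism), and a flat ring map satisfies going down. Note that $T$ itself need not be flat over $B$, which is precisely why the detour through $\widehat{T}$ is needed rather than a direct application of the flat going-down theorem to $B \hookrightarrow T$.
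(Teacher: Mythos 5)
Your argument is correct and is the standard proof of this fact, which the paper itself does not reprove (it cites [Ostermeyer, Lemma 3.2]). The route through the common completion $\widehat{T}$, using faithful flatness of $B \hookrightarrow \widehat{T}$ and $T \hookrightarrow \widehat{T}$, lying over to lift $Q_1$ to $\widehat{T}$, going down for the flat map $B \to \widehat{T}$, and then contracting back to $T$, is exactly the intended argument, and you correctly flag the one point that needs to be pinned down: the isomorphism $\widehat{B} \cong \widehat{T}$ is the one induced by the inclusion $B \hookrightarrow T$ (equivalently, the inclusion becomes an isomorphism after completion), so that contraction is genuinely transitive along $B \subseteq T \subseteq \widehat{T}$; this is indeed what the hypothesis means throughout the paper and its references.
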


   % While we will not need it, I suspect it is not difficult to show that the $P$ of Corollary \ref{corollary: minimal prime properties} is in fact a minimal prime ideal over $I_i$. 

  \section{The Main Result}\label{MainResult}

  In this section, we prove our main result. To do so, we work with a local ring $(T, M)$ and two prime ideals $Q_1$ and $Q_2$ of $T$.  We define $\overline{T} = T / (Q_1 \cap Q_2)$ and for notational convenience, if $I$ is an ideal of $T$ with $Q_1 \cap Q_2 \subseteq I$, then we use $\overline{I}$ to denote $ I / (Q_1 \cap Q_2) \subseteq \overline{T}$.
In addition, we use the following convention: if $R$ is a Noetherian ring, $R[[y_1, \ldots, y_n]]$ is the ring of formal power series over $R$, and $I$ is an ideal of $R$, then we define $I^{e} = IR[[y_1, \ldots, y_n]]$.  
    %Think of this as ``blowing up'' $I$ by extending it to the ring $R[[y_1, \ldots, y_n]]$.
    
 The following lemma is instrumental for our construction.

 \begin{lemma}
\label{lemma: two to one correspondence alt}
Let $y_1, \ldots, y_n$ be indeterminates and let $I$ be a proper ideal of the ring $\Q[[y_1, \ldots, y_n]]$. Suppose $\{\mathfrak{q}_1, \ldots , \mathfrak{q}_k\}$ is the set of minimal prime ideals over $I$ in $\Q[[y_1, \ldots ,y_n]].$
Assume that if $D$ is a domain containing the rationals then, for all $i = 1,2, \ldots ,k$, $\mathfrak{q}_iD[[y_1, \ldots ,y_n]]$ is a prime ideal of $D[[y_1, \ldots ,y_n]]$ and $\mathfrak{q}_iD[[y_1, \ldots ,y_n]] \subseteq \mathfrak{q}_jD[[y_1, \ldots ,y_n]]$ if and only if $i = j$.

%\begin{equation*}
 %           T \cong \frac{\overline{T}[[y_1, \ldots, y_n]]}{I\overline{T}[[y_1, \ldots, y_n]]}.
%        \end{equation*}
%
Let $(T, M)$ be a local ring containing the rationals  and let $Q_1, Q_2 \in \Spec (T)$ with $Q_1 \not\subseteq Q_2$ and $Q_2 \not\subseteq Q_1$. Let $\overline{T} = T / (Q_1 \cap Q_2)$ and let $(R, R \cap \overline{M})$ be a local subring of $(\overline{T}, \overline{M})$ such that $R$ contains the rationals, $R$ is a domain, and $\widehat{R} \cong \widehat{\overline{T}}$.  Then the minimal prime ideals over $I\overline{T}[[y_1, \ldots, y_n]]$ in $\overline{T}[[y_1, \ldots, y_n]]$ are given by 
        \begin{align*}
            (\overline{Q}_1^e, \mathfrak{q}_1\overline{T}[[y_1,\ldots ,y_n]]), &\ldots, (\overline{Q}_1^e, \mathfrak{q}_k\overline{T}[[y_1,\ldots ,y_n]]), \\
            (\overline{Q}_2^e, \mathfrak{q}_1\overline{T}[[y_1,\ldots ,y_n]]), &\ldots, (\overline{Q}_2^e, \mathfrak{q}_k\overline{T}[[y_1,\ldots ,y_n]]),
        \end{align*}
        and the minimal prime ideals over $IR[[y_1, \ldots, y_n]]$ in $R[[y_1, \ldots, y_n]]$ are given by 
        \begin{equation*}
            \mathfrak{q}_1R[[y_1, \ldots, y_n]], \ldots, \mathfrak{q}_kR[[y_1, \ldots, y_n]].
        \end{equation*}
    \end{lemma}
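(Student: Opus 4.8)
The plan is to establish the two claimed lists of minimal primes by reducing everything to the factorization $\overline{T} = T/(Q_1 \cap Q_2)$ and to the hypotheses on the $\mathfrak{q}_i$. First I would deal with $R[[y_1,\dots,y_n]]$, which is the easier case. Since $R$ is a domain containing $\mathbb{Q}$, the standing hypothesis on the $\mathfrak{q}_i$ (applied with $D = R$) says precisely that each $\mathfrak{q}_i R[[y_1,\dots,y_n]]$ is prime and that no containment holds among them. So it remains to check that these are exactly the minimal primes over $IR[[y_1,\dots,y_n]]$. For this I would use that $R[[y_1,\dots,y_n]]$ is faithfully flat over $\mathbb{Q}[[y_1,\dots,y_n]]$ (completion of a polynomial extension, or a direct limit argument), hence minimal primes over the extension of $I$ lie over minimal primes over $I$; combined with the fact that each $\mathfrak{q}_i R[[y_1,\dots,y_n]]$ contracts to $\mathfrak{q}_i$ (again by flatness) and the non-containment just noted, one gets that $\{\mathfrak{q}_i R[[y_1,\dots,y_n]]\}$ is exactly the set of minimal primes over $IR[[y_1,\dots,y_n]]$.

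Next I would handle $\overline{T}[[y_1,\dots,y_n]]$. The key structural fact is that $\overline{T}$ has (at least) the two minimal primes $\overline{Q}_1, \overline{Q}_2$ coming from $Q_1 \cap Q_2$, and $\overline{Q}_1 \cap \overline{Q}_2 = (0)$; more to the point, $(0) = \overline{Q}_1 \cap \overline{Q}_2$ in $\overline{T}$, and $\overline{T}/\overline{Q}_j \cong T/Q_j$ is a domain containing $\mathbb{Q}$. Passing to power series, $\overline{T}[[y_1,\dots,y_n]] / \overline{Q}_j^e \cong (\overline{T}/\overline{Q}_j)[[y_1,\dots,y_n]]$, a domain-coefficient power series ring, so I can apply the standing hypothesis with $D = \overline{T}/\overline{Q}_j$: the primes $\mathfrak{q}_i(\overline{T}/\overline{Q}_j)[[y_1,\dots,y_n]]$ are prime and pairwise incomparable, i.e. the ideals $(\overline{Q}_j^e, \mathfrak{q}_i \overline{T}[[y_1,\dots,y_n]])$ are prime in $\overline{T}[[y_1,\dots,y_n]]$. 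Then I would show these $2k$ primes are exactly the minimal primes over $I\overline{T}[[y_1,\dots,y_n]]$: since $\overline{Q}_1^e \cap \overline{Q}_2^e \subseteq (\overline{Q}_1^e + \overline{Q}_2^e)$-type reasoning isn't quite what's needed — rather, $\overline{Q}_1^e \cap \overline{Q}_2^e = (0)$ in $\overline{T}[[y_1,\dots,y_n]]$ because power series is flat over $\overline{T}$ and intersection of extended ideals behaves well under flat base change — the minimal primes of $\overline{T}[[y_1,\dots,y_n]]$ over $I\overline{T}[[y_1,\dots,y_n]]$ are found by working modulo each $\overline{Q}_j^e$ separately and taking minimal primes over the image of $I$ there, which by the $D = \overline{T}/\overline{Q}_j$ case is $\{\mathfrak{q}_i (\overline{T}/\overline{Q}_j)[[y_1,\dots,y_n]]\}_{i=1}^k$. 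Finally I would rule out containments across the two families: $(\overline{Q}_1^e, \mathfrak{q}_i\overline{T}[[\cdot]]) \subseteq (\overline{Q}_2^e, \mathfrak{q}_{i'}\overline{T}[[\cdot]])$ would force $\overline{Q}_1^e \subseteq (\overline{Q}_2^e, \mathfrak{q}_{i'}\overline{T}[[\cdot]])$, hence $\overline{Q}_1 \subseteq$ the contraction, which lies over $\overline{Q}_2$ together with a positive-height piece; intersecting back down and using $Q_1 \not\subseteq Q_2$ gives a contradiction (a containment of ideals of $T$ forced by contracting along $T[[y]] \to T$, plus minimality of $\overline{Q}_1,\overline{Q}_2$).

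The main obstacle I expect is the careful bookkeeping of \emph{contractions and extensions across the power series extension}, specifically verifying $\overline{Q}_1^e \cap \overline{Q}_2^e = (0)$ and that ``minimal primes over $I$ localize/quotient correctly'' through the non-Noetherian-looking but actually Noetherian ring $\overline{T}[[y_1,\dots,y_n]]$ — here $\overline{T}$ is a quotient of a complete local ring (via $\widehat{R}\cong\widehat{\overline{T}}$ one knows $\overline{T}$ is at least a homomorphic image structure, though $\overline{T}$ itself need only be a subring-quotient, so I'd want $\overline{T}$ Noetherian, which holds since $T$ is local hence Noetherian and $\overline{T}$ is a quotient). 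I would also need the elementary fact that for a Noetherian ring $A$ with minimal primes $P_1,\dots,P_r$, the minimal primes over an ideal $J$ are the minimal members of $\bigcup_j \{\text{primes of } A/P_j \text{ minimal over } (J+P_j)/P_j\}$ pulled back to $A$; applying this with $A = \overline{T}$, $J = I\overline{T}$ isolates the two $\overline{Q}_j$-strata and reduces to the domain case already handled. The cross-family incomparability is then the last routine check, using $Q_1 \not\subseteq Q_2$ and $Q_2 \not\subseteq Q_1$ together with the ``only if'' direction of the hypothesis on the $\mathfrak{q}_i$.
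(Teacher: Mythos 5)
Your treatment of $\overline{T}[[y_1,\ldots,y_n]]$ follows essentially the paper's route: establish primeness of the $2k$ ideals via the hypothesis with $D = \overline{T}/\overline{Q}_j$, use that $\overline{Q}_1^e, \overline{Q}_2^e$ are the minimal primes of $\overline{T}[[y_1,\ldots,y_n]]$ to reduce to the two domain strata, and rule out cross-family containments; your sketch of that last step (``lies over $\overline{Q}_2$ together with a positive-height piece'') should simply say $(\overline{Q}_2^e, \mathfrak{q}_{i'}\overline{T}[[y_1,\ldots,y_n]]) \cap \overline{T} = \overline{Q}_2$, which is immediate since every element of $\mathfrak{q}_{i'}\overline{T}[[y_1,\ldots,y_n]]$ has zero constant term -- that is precisely the paper's argument. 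Where you genuinely diverge is the $R$ part. The paper uses $\widehat{R} \cong \widehat{\overline{T}}$ in an essential way: it passes to $\widehat{R[[y_1,\ldots,y_n]]/IR[[y_1,\ldots,y_n]]} \cong \widehat{\overline{T}[[y_1,\ldots,y_n]]/I\overline{T}[[y_1,\ldots,y_n]]}$, applies the going-down statement of Proposition~\ref{proposition: folklore going down}, and invokes surjectivity of the contraction map to identify all minimal primes. You instead contract along $\Q[[y_1,\ldots,y_n]] \hookrightarrow R[[y_1,\ldots,y_n]]$: any prime $P \supseteq IR[[y_1,\ldots,y_n]]$ has $P \cap \Q[[y_1,\ldots,y_n]] \supseteq I$, hence contains some $\mathfrak{q}_\ell$, hence $P \supseteq \mathfrak{q}_\ell R[[y_1,\ldots,y_n]]$, and minimality of $P$ then forces equality; the reverse inclusion is handled by the incomparability hypothesis. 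This is a lighter argument, and it shows the second half of the lemma holds for \emph{any} Noetherian local domain $R$ containing $\Q$, with no reference to $\overline{T}$ or to completions at all. What the paper's route buys in exchange is that it simultaneously produces the two-to-one correspondence by intersection between minimal primes of $\overline{T}[[y_1,\ldots,y_n]]/I\overline{T}[[y_1,\ldots,y_n]]$ and $R[[y_1,\ldots,y_n]]/IR[[y_1,\ldots,y_n]]$, which Remark~\ref{remark: two to one correspondence} records and Theorem~\ref{theorem: generalized gluing alt} uses; your argument does not yield that correspondence as a by-product. One small correction: your justification for flatness of $\Q[[y_1,\ldots,y_n]] \to R[[y_1,\ldots,y_n]]$ (``completion of a polynomial extension, or a direct limit argument'') doesn't work -- power series rings are neither such a completion nor compatible with direct limits in $R$ -- though the flatness itself is true by the local criterion since $y_1,\ldots,y_n$ is a regular sequence on $R[[y_1,\ldots,y_n]]$. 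In any case it is not needed: the contraction argument above never uses it.
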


\begin{proof}
Since $T$ contains the rationals, so does $\overline{T}, \overline{T}/\overline{Q}_1$, and $\overline{T}/\overline{Q}_2$.  For $j = 1,2, \ldots,k$ and for $i = 1,2$, we have
%\begin{equation}
%\label{equation: prime proof}
$$\frac{\overline{T}/\overline{Q}_i[[y_1, \ldots, y_n]]}{\mathfrak{q}_j\overline{T}/\overline{Q}_i[[y_1, \ldots, y_n]]} \cong \frac{\overline{T}[[y_1, \ldots, y_n]]/\overline{Q}^e_i}{(\overline{Q}^e_i, \mathfrak{q}_j\overline{T}[[y_1, \ldots ,y_n]])/\overline{Q}^{e}_i} \cong \frac{\overline{T}[[y_1, \ldots, y_n]]}{(\overline{Q}_i^e, \mathfrak{q}_j\overline{T}[[y_1, \ldots ,y_n]])}.$$
By assumption, $\mathfrak{q}_j\overline{T}/\overline{Q}_i[[y_1, \ldots, y_n]]$ is a prime ideal of $\overline{T}/\overline{Q}_i[[y_1, \ldots, y_n]]$ and so $(\overline{Q}_i^e, \mathfrak{q}_j\overline{T}[[y_1, \ldots ,y_n]])$ is a prime ideal of $\overline{T}[[y_1, \ldots ,y_n]]$. 

%We now show that 
%$$(\overline{Q}_1^e, \mathfrak{q}_1\overline{T}[[y_1,\ldots ,y_n]]) \cap \cdots \cap (\overline{Q}_1^e, \mathfrak{q}_k\overline{T}[[y_1,\ldots ,y_n]]) \cap $$
%$$(\overline{Q}_2^e, \mathfrak{q}_1\overline{T}[[y_1,\ldots ,y_n]]) \cap \cdots \cap (\overline{Q}_2^e, \mathfrak{q}_k\overline{T}[[y_1,\ldots ,y_n]])$$ is a minimal primary decomposition of $I\overline{T}[[y_1, \ldots, y_n]]$. Since $I\overline{T}[[y_1, \ldots, y_n]]$ is a radical ideal of $\overline{T}[[y_1, \ldots, y_n]]$, it will then follow that the minimal prime ideals over $I\overline{T}[[y_1, \ldots, y_n]]$ in $\overline{T}[[y_1, \ldots, y_n]]$ are given by 
%        \begin{equation*}
%            (\overline{Q}_1^e, \mathfrak{q}_1\overline{T}[[y_1,\ldots ,y_n]]), \ldots, (\overline{Q}_1^e, \mathfrak{q}_k\overline{T}[[y_1,\ldots ,y_n]]), (\overline{Q}_2^e, \mathfrak{q}_1\overline{T}[[y_1,\ldots ,y_n]]), \ldots, (\overline{Q}_2^e, \mathfrak{q}_k\overline{T}[[y_1,\ldots ,y_n]]).
%        \end{equation*}
%\end{equation} 

Let $J$ be a prime ideal of $\overline{T}[[y_1, \ldots ,y_n]]$ such that $I\overline{T}[[y_1, \ldots ,y_n]] \subseteq J.$ Then
$$I \subseteq I\overline{T}[[y_1, \ldots ,y_n]] \cap \Q[[y_1, \ldots ,y_n]]\subseteq J \cap \Q[[y_1, \ldots ,y_n]],$$ and so $J \cap \Q[[y_1, \ldots ,y_n]]$ is a prime ideal of $\Q[[y_1, \ldots ,y_n]]$ containing $I$. It follows that $J \cap \Q[[y_1, \ldots ,y_n]]$ contains a minimal prime ideal of $I$. Thus, $J \cap \Q[[y_1, \ldots ,y_n]]$ contains $\mathfrak{q}_{\ell}$ for some $\ell \in \{1,2, \ldots ,k\}$, and we have $\mathfrak{q}_{\ell}\overline{T}[[y_1, \ldots ,y_n]] \subseteq J.$ The minimal prime ideals of $\overline{T}$ are $\overline{Q}_1$ and $\overline{Q}_2.$ Hence, the minimal prime ideals of $\overline{T}[[y_1, \ldots ,y_n]]$ are $\overline{Q}^e_1$ and $\overline{Q}^e_2.$ As $J$ must contain a minimal prime ideal of $\overline{T}[[y_1, \ldots ,y_n]]$, we have that $J$ contains $\overline{Q}^e_1$ or $\overline{Q}^e_2.$ Now, $I\overline{T}[[y_1, \ldots ,y_n]] \subseteq (\overline{Q}_{i}^e, \mathfrak{q}_{j}\overline{T}[[y_1,\ldots ,y_n]])$ for $i = 1,2$ and for all $j = 1,2, \ldots ,k$ and $(\overline{Q}_{i}^e,\mathfrak{q}_{\ell}\overline{T}[[y_1,\ldots ,y_n]]) \subseteq J$ for $i = 1$ or $i = 2$. It follows that if $J$ is a minimal prime ideal of $I\overline{T}[[y_1, \ldots ,y_n]]$, then $J = (\overline{Q}_i^e, \mathfrak{q}_j\overline{T}[[y_1,\ldots ,y_n]])$ for some $i = 1,2$ and for some $j = 1,2, \ldots ,k$.

Let $\mu \in \{1,2\}$ and let $\rho \in \{1,2, \ldots ,k\}$. The prime ideal $(\overline{Q}_{\mu}^e, \mathfrak{q}_{\rho}\overline{T}[[y_1,\ldots ,y_n]])$ contains $I\overline{T}[[y_1,\ldots ,y_n]]$ and so it contains a minimal prime ideal over $I\overline{T}[[y_1,\ldots ,y_n]]$. By what was shown in the previous paragraph, we have that $(\overline{Q}_i^e, \mathfrak{q}_j\overline{T}[[y_1,\ldots ,y_n]]) \subseteq (\overline{Q}_{\mu}^e, \mathfrak{q}_{\rho}\overline{T}[[y_1,\ldots ,y_n]])$ for some $i \in \{1,2\}$ and some $j \in \{1,2, \ldots ,k\}$ where $(\overline{Q}_i^e, \mathfrak{q}_j\overline{T}[[y_1,\ldots ,y_n]])$ is a minimal prime ideal over $I\overline{T}[[y_1,\ldots ,y_n]]$. If $i \neq \mu$, then there exists $z \in \overline{Q}_i$ with $z \not\in \overline{Q}_{\mu}.$ The elements of $\mathfrak{q}_{\rho}\overline{T}[[y_1,\ldots ,y_n]]$ all have constant term equal to zero, and so $z \in (\overline{Q}_i^e, \mathfrak{q}_j\overline{T}[[y_1,\ldots ,y_n]])$ but $z \not\in (\overline{Q}_{\mu}^e, \mathfrak{q}_{\rho}\overline{T}[[y_1,\ldots ,y_n]]),$ a contradiction. It follows that $i = \mu$. Thus
$$\frac{(\overline{Q}_{\mu}^e,\mathfrak{q}_{j}\overline{T}[[y_1,\ldots ,y_n]])}{\overline{Q}_{\mu}^e }\subseteq \frac{(\overline{Q}_{\mu}^e, \mathfrak{q}_{\rho}\overline{T}[[y_1,\ldots ,y_n]])}{\overline{Q}_{\mu}^e}$$ and so $\mathfrak{q}_{j}\overline{T}/\overline{Q}_{\mu}[[y_1,\ldots ,y_n]] \subseteq \mathfrak{q}_{\rho}\overline{T}/\overline{Q}_{\mu}[[y_1,\ldots ,y_n]]$. By hypotheses, $j = \rho$ and we have that $(\overline{Q}_{\mu}^e, \mathfrak{q}_{\rho}\overline{T}[[y_1,\ldots ,y_n]])$ is a minimal prime ideal over $I\overline{T}[[y_1,\ldots ,y_n]]$. Therefore the minimal prime ideals over $I\overline{T}[[y_1, \ldots, y_n]]$ in $\overline{T}[[y_1, \ldots, y_n]]$ are given by 
        \begin{equation*}
            (\overline{Q}_1^e, \mathfrak{q}_1\overline{T}[[y_1,\ldots ,y_n]]), \ldots, (\overline{Q}_1^e, \mathfrak{q}_k\overline{T}[[y_1,\ldots ,y_n]]), (\overline{Q}_2^e, \mathfrak{q}_1\overline{T}[[y_1,\ldots ,y_n]]), \ldots, (\overline{Q}_2^e, \mathfrak{q}_k\overline{T}[[y_1,\ldots ,y_n]]).
        \end{equation*}

By assumption, $\mathfrak{q}_j R[[y_1, \ldots ,y_n]]$ is a prime ideal of $R[[y_1, \ldots ,y_n]]$ for all $j = 1,2, \ldots ,k,$ and so $\mathfrak{q}_j R[[y_1, \ldots ,y_n]]/IR[[y_1, \ldots ,y_n]]$ is a prime ideal of $R[[y_1, \ldots ,y_n]]/IR[[y_1, \ldots ,y_n]]$ for all $j = 1,2, \ldots ,k$. Note that the completion of $R[[y_1, \ldots ,y_n]]$ is isomorphic to the completion of $\overline{T}[[y_1, \ldots ,y_n]]$. 
 It follows that the completion of $R[[y_1, \ldots ,y_n]]/IR[[y_1, \ldots ,y_n]]$ is isomorphic to the completion of $\overline{T}[[y_1, \ldots ,y_n]]/I\overline{T}[[y_1, \ldots ,y_n]]$. By Proposition \ref{proposition: folklore going down}, the intersection of a minimal prime ideal of $\overline{T}[[y_1, \ldots ,y_n]]/I\overline{T}[[y_1, \ldots ,y_n]]$ with $R[[y_1, \ldots ,y_n]]/IR[[y_1, \ldots ,y_n]]$ is a minimal prime ideal of $R[[y_1, \ldots ,y_n]]/IR[[y_1, \ldots ,y_n]]$. Let $\mathcal{J} =(\overline{Q}_i^e, \mathfrak{q}_j\overline{T}[[y_1,\ldots ,y_n]])/I\overline{T}[[y_1, \ldots ,y_n]]$ be a minimal prime ideal of $\overline{T}[[y_1, \ldots ,y_n]]/I\overline{T}[[y_1, \ldots ,y_n]]$. Then $\mathcal{J} \cap (R[[y_1, \ldots ,y_n]]/IR[[y_1, \ldots ,y_n]])$ contains the prime ideal $\mathfrak{q}_j R[[y_1, \ldots ,y_n]]/IR[[y_1, \ldots ,y_n]]$ of $ R[[y_1, \ldots ,y_n]]/IR[[y_1, \ldots ,y_n]]$. Therefore, $\mathcal{J} \cap (R[[y_1, \ldots ,y_n]]/IR[[y_1, \ldots ,y_n]]) = \mathfrak{q}_j R[[y_1, \ldots ,y_n]]/IR[[y_1, \ldots ,y_n]]$. Note that the map from the set of prime ideals of $\overline{T}[[y_1, \ldots ,y_n]]/I\overline{T}[[y_1, \ldots ,y_n]]$ to the set of prime ideals of $R[[y_1, \ldots ,y_n]]/IR[[y_1, \ldots ,y_n]]$ given by intersection is onto. Thus, the set of minimal prime ideals of $R[[y_1, \ldots ,y_n]]/IR[[y_1, \ldots ,y_n]]$ is 
$$\{\mathfrak{q}_1R[[y_1, \ldots ,y_n]]/IR[[y_1, \ldots ,y_n]], \ldots ,\mathfrak{q}_kR[[y_1, \ldots ,y_n]]/IR[[y_1, \ldots ,y_n]]\},$$
and it follows that the minimal prime ideals over $IR[[y_1, \ldots ,y_n]]$ in $R[[y_1, \ldots ,y_n]]$ are given by $$\mathfrak{q}_1R[[y_1, \ldots ,y_n]], \ldots ,\mathfrak{q}_kR[[y_1, \ldots ,y_n]].$$
\end{proof}

    \begin{remark}
        \label{remark: two to one correspondence}
        Working in the context of Lemma \ref{lemma: two to one correspondence alt}, there is a two-to-one correspondence between the minimal prime ideals of $\overline{T}[[y_1, \ldots, y_n]]/I\overline{T}[[y_1, \ldots, y_n]]$ and the minimal prime ideals of $R[[y_1, \ldots, y_n]]/IR[[y_1, \ldots, y_n]]$, and that correspondence is given by intersection.  Moreover, for $i \in \{1,2\}$, there is a bijection between minimal prime ideals of $\overline{T}[[y_1, \ldots, y_n]]/I\overline{T}[[y_1, \ldots, y_n]]$ containing  $\overline{Q}^e_i/I\overline{T}[[y_1, \ldots, y_n]]$ and minimal prime ideals of $R[[y_1, \ldots, y_n]]/IR[[y_1, \ldots, y_n]]$.  Again, this bijection is given by intersection.
    \end{remark}

    In Theorem \ref{theorem: generalized gluing alt}, we use Remark \ref{remark: two to one correspondence} to demonstrate that, for the rings $T$ and $B$ from Lemma \ref{lemma: two to one correspondence alt},  there is a  coheight preserving bijection between the minimal prime ideals of $T_{Q_i}$ and the minimal prime ideals of $B_{B \cap Q_1}$ where $i \in \{1,2\}$. This shows that, given a saturated chain of prime ideals of $T$ that starts at a minimal prime ideal and ends at $Q_i$ and has maximal length, there is a corresponding saturated chain of prime ideals of $B$ of the same length that starts at a minimal prime ideal of $B$ and ends at $B \cap Q_i$. In other words, there are important parts of the prime spectrum of $T$ below both $Q_1$ and $Q_2$ that exist in the prime spectrum of $B$ below $B \cap Q_1$.

    We are now ready to state the main result of this paper.

\begin{theorem}
\label{theorem: generalized gluing alt}
Let $y_1, \ldots, y_n$ be indeterminates and let $I$ be a proper ideal of the ring $\Q[[y_1, \ldots, y_n]]$. Suppose $\{\mathfrak{q}_1, \ldots , \mathfrak{q}_k\}$ is the set of minimal prime ideals over $I$ in $\Q[[y_1, \ldots ,y_n]].$
Assume that if $D$ is a domain containing the rationals then, for all $i = 1,2, \ldots ,k$, $\mathfrak{q}_iD[[y_1, \ldots ,y_n]]$ is a prime ideal of $D[[y_1, \ldots ,y_n]]$ and $\mathfrak{q}_iD[[y_1, \ldots ,y_n]] \subseteq \mathfrak{q}_jD[[y_1, \ldots ,y_n]]$ if and only if $i = j$.

Let $(T, M)$ be an uncountable local ring containing the rationals such that $\abs{T} = \abs{T/M}$. Let $Q_1, Q_2 \in \Spec (T)$ be such that $Q_1 \not\subseteq Q_2$ and $Q_2 \not\subseteq Q_1$ and such that $T_{Q_1}$ and $T_{Q_2}$ are catenary.  Let $\overline{T} = T / (Q_1 \cap Q_2)$ and assume that there is a ring isomorphism
\begin{equation*}
    \phi: T \longrightarrow   \frac{\overline{T}[[y_1, \ldots, y_n]]}{I\overline{T}[[y_1, \ldots, y_n]]}
\end{equation*}
   satisfying $\phi(Q_1) = (\overline{Q}^e_1, y_1, \ldots ,y_n)/I\overline{T}[[y_1, \ldots, y_n]]$ and $\phi(Q_2) = (\overline{Q}^e_2, y_1, \ldots ,y_n)/I\overline{T}[[y_1, \ldots, y_n]]$. Then there exists an uncountable local subring $(B, B \cap M)$ of $T$ containing the rationals that satisfies the following conditions:
    
\begin{enumerate}
    \item $\widehat{B} \cong \widehat{T}$,
    \item $\abs{B} = \abs{B / (B \cap M)}$,
    \item $B \cap Q_1 = B \cap Q_2$,
    \item $B_{B \cap Q_1}$ is catenary,
    \item $B \cong R[[y_1, \ldots, y_n]]/IR[[y_1, \ldots, y_n]]$ where $R$ is a domain containing the rationals,
    \item The map $f: \Spec (T/(Q_1 \cap Q_2)) \to \Spec (B/(B \cap Q_1))$ given by $f(P/(Q_1 \cap Q_2)) = (B \cap P)/(B \cap Q_1)$ is a bijection when restricted to prime ideals of positive height,  
    %Moreover, for the two minimal prime ideals $\overline{Q}_1, \overline{Q}_2 \in \Min \overline{T}$, we have $f(\overline{Q}_1) = f(\overline{Q}_2) = (0)_{\overline{B}}$,
    \item For $i \in \{1, 2\}$, there is a coheight preserving bijection between $\Min T_{Q_i}$ and $\Min B_{B \cap Q_1}$,
    \item If $T$ is quasi-excellent, then so is $B$.
        \end{enumerate}
    \end{theorem}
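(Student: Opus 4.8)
\emph{Proof idea.} Throughout write $\mathbf y$ for the sequence $y_1,\dots,y_n$, so that $A[[\mathbf y]]$ means $A[[y_1,\dots,y_n]]$ and $(\mathbf y)$ means the ideal generated by $y_1,\dots,y_n$ in such a ring. The plan is to produce $B$ as $R[[\mathbf y]]/IR[[\mathbf y]]$, where $R$ is a precompletion of $\overline T$ supplied by Theorem~\ref{theorem: GluingTheorem}, and then to read off conditions (1)--(8) from this description together with Lemma~\ref{lemma: two to one correspondence alt}, Remark~\ref{remark: two to one correspondence}, and standard facts about flat local homomorphisms. First I would construct $R$. Since $Q_1\cap Q_2$ is an intersection of prime ideals it is radical, so $\overline T$ is reduced with $\Min\overline T=\{\overline Q_1,\overline Q_2\}$; moreover $\overline T/\overline M\cong T/M$ is uncountable and $\abs{\overline T}=\abs{T/M}=\abs{\overline T/\overline M}$, so $\overline T$ satisfies the hypotheses of Theorem~\ref{theorem: GluingTheorem}. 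Applying that theorem to the single-block partition $\{\{\overline Q_1,\overline Q_2\}\}$ of $\Min\overline T$ produces a reduced local ring $R\subseteq\overline T$ with $\Q\subseteq R$, $\widehat R\cong\widehat{\overline T}$, $R/(R\cap\overline M)$ uncountable, $\abs R=\abs{R/(R\cap\overline M)}$, $R\cap\overline Q_1=R\cap\overline Q_2$, and conclusion (5) of that theorem. Since $R\cap\overline Q_1$ is then the unique minimal prime of the reduced ring $R$, it equals $(0)$; hence $R$ is a domain containing $\Q$ and $R\hookrightarrow D_i:=\overline T/\overline Q_i$ for $i=1,2$. This $R$ is the domain of conclusion (5).

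Now set $B:=R[[\mathbf y]]/IR[[\mathbf y]]$. Because $\widehat{A[[\mathbf y]]}\cong\widehat A[[\mathbf y]]$ for every Noetherian local ring $A$ and $\widehat R\cong\widehat{\overline T}$, we have $\widehat{R[[\mathbf y]]}\cong\widehat{\overline T[[\mathbf y]]}=:\mathcal T$. Since a Noetherian local ring and each of its quotients embed into their completions, $IR[[\mathbf y]]=I\mathcal T\cap R[[\mathbf y]]$ and $I\overline T[[\mathbf y]]=I\mathcal T\cap\overline T[[\mathbf y]]$, whence $I\overline T[[\mathbf y]]\cap R[[\mathbf y]]=IR[[\mathbf y]]$, so the composite $B\hookrightarrow\overline T[[\mathbf y]]/I\overline T[[\mathbf y]]\xrightarrow{\phi^{-1}}T$ is injective; I identify $B$ with its image and check its maximal ideal is $B\cap M$. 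Completing gives $\widehat B\cong\mathcal T/I\mathcal T\cong\widehat T$, which is (1); (5) is built in; (2) holds because $B\subseteq T$ forces $\abs B\le\abs T=\abs{T/M}$ while $B/(B\cap M)\cong R/(R\cap\overline M)\cong\overline T/\overline M\cong T/M$ is uncountable (residue fields are preserved under completion); and (8) holds since quotients and formal power series extensions of quasi-excellent rings are quasi-excellent, so $T$ quasi-excellent forces $\overline T$ quasi-excellent, hence $R$ quasi-excellent by Proposition~\ref{quasiexcellent}, hence $B$ quasi-excellent. For (3) and (6), note $I\subseteq(\mathbf y)\Q[[\mathbf y]]$ since $I$ is proper, and $R\cap\overline Q_i=(0)$, so $R[[\mathbf y]]\cap\phi^{-1}(Q_i)=(\mathbf y)R[[\mathbf y]]$ for $i=1,2$; thus $B\cap Q_1=B\cap Q_2=(\mathbf y)R[[\mathbf y]]/IR[[\mathbf y]]$, giving (3), and this identifies $B/(B\cap Q_1)$ with $R$ and $T/(Q_1\cap Q_2)$ with $\overline T$ so that the map $f$ of (6) becomes $\overline P\mapsto R\cap\overline P$; conclusion (5) of Theorem~\ref{theorem: GluingTheorem}, together with the fact that $\overline Q_1$ and $\overline Q_2$ contract to $(0)$ in $R$, then shows $f$ restricts to a bijection on positive-height primes, which is (6).

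For (7), Lemma~\ref{lemma: two to one correspondence alt} and Remark~\ref{remark: two to one correspondence} give $\Min T=\{\mathfrak P_{i,j}\}$ with $\mathfrak P_{i,j}=(\overline Q_i^e,\mathfrak q_j\overline T[[\mathbf y]])/I\overline T[[\mathbf y]]$ and $\Min B=\{\mathfrak p_j\}$ with $\mathfrak p_j=\mathfrak q_jR[[\mathbf y]]/IR[[\mathbf y]]$, and $B\cap\mathfrak P_{i,j}=\mathfrak p_j$. Because each $\mathfrak q_j$ lies in the maximal ideal $(\mathbf y)$ of $\Q[[\mathbf y]]$, the minimal primes of $T$ inside $Q_i$ are exactly $\mathfrak P_{i,1},\dots,\mathfrak P_{i,k}$, and the minimal primes of $B$ inside $B\cap Q_1$ are all of $\mathfrak p_1,\dots,\mathfrak p_k$; so $\mathfrak P_{i,j}\mapsto\mathfrak p_j$ is a bijection $\Min T_{Q_i}\to\Min B_{B\cap Q_1}$. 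To see it preserves coheight, observe $T/\mathfrak P_{i,j}\cong D_i[[\mathbf y]]/\mathfrak q_jD_i[[\mathbf y]]$ with $Q_i$ corresponding to $(\mathbf y)D_i[[\mathbf y]]/\mathfrak q_jD_i[[\mathbf y]]$, and $B/\mathfrak p_j=R[[\mathbf y]]/\mathfrak q_jR[[\mathbf y]]$ with $B\cap Q_1$ corresponding to $(\mathbf y)R[[\mathbf y]]/\mathfrak q_jR[[\mathbf y]]$, so the two coheights are $\hgt\bigl((\mathbf y)D_i[[\mathbf y]]/\mathfrak q_jD_i[[\mathbf y]]\bigr)$ and $\hgt\bigl((\mathbf y)R[[\mathbf y]]/\mathfrak q_jR[[\mathbf y]]\bigr)$. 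For any domain $D\supseteq\Q$ the map $\Q[[\mathbf y]]\to D[[\mathbf y]]$ is flat and local, hence so is $\Q[[\mathbf y]]/\mathfrak q_j\to D[[\mathbf y]]/\mathfrak q_jD[[\mathbf y]]$, and in it $(\mathbf y)$ lies over the maximal ideal with closed fibre the field $\mathrm{Frac}(D)$; by the dimension formula for flat local homomorphisms, $\hgt\bigl((\mathbf y)D[[\mathbf y]]/\mathfrak q_jD[[\mathbf y]]\bigr)=\dim(\Q[[\mathbf y]]/\mathfrak q_j)=n-\hgt_{\Q[[\mathbf y]]}\mathfrak q_j$. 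Taking $D=D_i$ and $D=R$ makes both coheights equal $n-\hgt_{\Q[[\mathbf y]]}\mathfrak q_j$, proving (7); the catenarity hypothesis is not needed for this.

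Conclusion (4) is where I expect the real difficulty to lie. A Noetherian local ring is catenary if and only if each of its quotients by a minimal prime is catenary, so by the description of $\Min B_{B\cap Q_1}$ above, (4) reduces to showing that $(R[[\mathbf y]]/\mathfrak q_jR[[\mathbf y]])_{(\mathbf y)R[[\mathbf y]]}$ is catenary for each $j$. This is precisely where the hypothesis that $T_{Q_1}$ and $T_{Q_2}$ be catenary is used: it says exactly that the analogous rings $(D_i[[\mathbf y]]/\mathfrak q_jD_i[[\mathbf y]])_{(\mathbf y)D_i[[\mathbf y]]}$, which are quotients of $T_{Q_i}$ by minimal primes, are catenary. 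The plan is to transfer catenarity down the inclusions $R\subseteq D_1$ and $R\subseteq D_2$: using the going down property of Proposition~\ref{proposition: folklore going down} between $\overline T[[\mathbf y]]$ and $R[[\mathbf y]]$ (which have isomorphic completions), the fact that conclusion (5) of Theorem~\ref{theorem: GluingTheorem} forces the only primes of $\overline T$ contracting to $(0)$ in $R$ to be $\overline Q_1,\overline Q_2$, and the faithful flatness of $\Q[[\mathbf y]]\to R[[\mathbf y]]$, I would aim to show that every prime $\mathfrak d$ of $R[[\mathbf y]]$ with $\mathfrak q_jR[[\mathbf y]]\subseteq\mathfrak d\subseteq(\mathbf y)R[[\mathbf y]]$ is extended from $\Q[[\mathbf y]]$. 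That identifies the interval $[\mathfrak q_jR[[\mathbf y]],(\mathbf y)R[[\mathbf y]]]$ of $\Spec R[[\mathbf y]]$ with the corresponding interval of $\Spec\Q[[\mathbf y]]$, which is catenary since $\Q[[\mathbf y]]$ is a regular (hence Cohen--Macaulay, hence catenary) local ring; catenarity of $B_{B\cap Q_1}$ then follows. Pinning down the prime ideals of $R[[\mathbf y]]$ below $(\mathbf y)R[[\mathbf y]]$ tightly enough to establish this — and genuinely feeding in the catenarity of $T_{Q_1}$ and $T_{Q_2}$, e.g.\ by comparing saturated chains in $R[[\mathbf y]]$ with saturated chains in $D_1[[\mathbf y]]$ and $D_2[[\mathbf y]]$ — is the crux of the argument.
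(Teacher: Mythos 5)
Your overall plan is the same as the paper's: apply Theorem~\ref{theorem: GluingTheorem} to $\overline T$ to get the domain $R$, form $B=R[[\mathbf y]]/IR[[\mathbf y]]$, embed it in $T$ via $\phi^{-1}$, and read off the properties from Lemma~\ref{lemma: two to one correspondence alt} and Remark~\ref{remark: two to one correspondence}. Your arguments for (1), (2), (3), (5), (6), and (8) agree with the paper's in substance. The interesting divergences are in (4) and (7), and you have misallocated where the catenarity hypothesis is used.

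\emph{The gap, and what you missed, in (4).} You present (4) as the crux, plan an elaborate catenarity transfer from $D_i[[\mathbf y]]$ down to $R[[\mathbf y]]$ via going-down and a claim that all primes of $R[[\mathbf y]]$ in the interval $[\mathfrak q_jR[[\mathbf y]],(\mathbf y)]$ are extended from $\Q[[\mathbf y]]$, and admit that you do not carry this out. None of that is needed. Since $R$ is a domain, the chain $(0)\subsetneq(y_1)\subsetneq\cdots\subsetneq(y_1,\dots,y_n)$ gives $\dim R[[\mathbf y]]_{(\mathbf y)}\ge n$, while the generalized principal ideal theorem gives $\dim R[[\mathbf y]]_{(\mathbf y)}\le n$ because its maximal ideal is generated by the $n$ elements $y_1,\dots,y_n$; hence $R[[\mathbf y]]_{(\mathbf y)}$ is a regular local ring and in particular catenary. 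Since $B_{B\cap Q_1}\cong R[[\mathbf y]]_{(\mathbf y)}/IR[[\mathbf y]]_{(\mathbf y)}$ is a quotient of a catenary local ring, it is catenary. This is exactly the paper's argument; it does not use catenarity of $T_{Q_1}$ or $T_{Q_2}$ at all. Your claim that the catenarity hypothesis ``is precisely where'' it is used in proving (4) is incorrect, and the extension-of-primes claim you would need for your route is both unproved and substantially harder than the fact you are trying to establish.

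\emph{A genuine difference in (7).} The paper uses the catenarity hypothesis only for (7), and does so by hand: it builds a saturated chain $J_0\subsetneq\cdots\subsetneq J_s=Q_1'$ in $T'$ whose contractions to $B'$ remain strict, uses going-down plus catenarity of $T'_{Q_1'}$ to show the contracted chain is saturated in $B'$, and then reads off coheights because both localizations are catenary. Your route is different: you compute the coheight directly by the dimension formula for flat local homomorphisms applied to $\Q[[\mathbf y]]/\mathfrak q_j\to D[[\mathbf y]]/\mathfrak q_jD[[\mathbf y]]$ (which is flat since $\Q[[\mathbf y]]\to D[[\mathbf y]]$ is flat, e.g.\ by the local criterion using the Koszul resolution of $\Q$ over $\Q[[\mathbf y]]$ and the regularity of $y_1,\dots,y_n$ on $D[[\mathbf y]]$), with closed fibre $\mathrm{Frac}(D)$ of dimension $0$, getting coheight $n-\hgt_{\Q[[\mathbf y]]}\mathfrak q_j$ uniformly in $D$. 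This is a cleaner argument and, as you note, does not need the catenarity of $T_{Q_i}$; combined with the simple argument for (4) above, it suggests the catenarity hypothesis on $T_{Q_1}$ and $T_{Q_2}$ may in fact be dispensable. That is a nice observation, but it does not rescue your treatment of (4), which as written is incomplete and pointed in the wrong direction.
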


\begin{proof}
The local ring $\overline{T}$ satisfies the conditions of Theorem \ref{theorem: GluingTheorem} and so there is an uncountable local subring $(R, R \cap \overline{M})$ of $\overline{T}$ such that $R$ contains the rationals, $\widehat{R} \cong \widehat{\overline{T}}$, $|R| = |R/(R \cap \overline{M})|$, $R \cap \overline{Q}_1 = R \cap \overline{Q}_2$, and the map $g: \Spec (\overline{T}) \longrightarrow \Spec(R)$ given by $g(P) = R \cap P$ is a bijection when restricted to prime ideals of postive height. Note that $R$ is reduced and has only one minimal prime ideal, and so $R$ is a domain 
and we have $R \cap \overline{Q}_1 = R \cap \overline{Q}_2 = (0)$. 
Let 
$$    B' = \frac{R[[y_1, \ldots, y_n]]}{IR[[y_1, \ldots, y_n]]} \mbox{ and let } T' = \frac{\overline{T}[[y_1, \ldots, y_n]]}{I\overline{T}[[y_1, \ldots, y_n]]}.$$
Then
    \begin{align*}
    \widehat{B'} \cong \frac{\widehat{R}[[y_1, \ldots, y_n]]}{I\widehat{R}[[y_1, \ldots, y_n]]}
%    &\cong \frac{\widehat{R[[y_1, \ldots, y_n]]}}{(IR[[y_1, \ldots, y_n]])\widehat{R[[y_1, \ldots, y_n]]}}\\
    \cong \frac{\widehat{\overline{T}}[[y_1, \ldots, y_n]]}{I\widehat{\overline{T}}[[y_1, \ldots, y_n]]} 
    &\cong \frac{\widehat{\overline{T}[[y_1, \ldots, y_n]]}}{I\overline{T}[[y_1, \ldots, y_n]]} \cong \widehat{T'} \cong \widehat{T}.
        \end{align*}
    Since $R$ contains the rationals, so does $B'$. Observe that $M_{B'} = (R \cap \overline{M},y_1, \ldots ,y_n)/IR[[y_1, \ldots, y_n]]$ is the maximal ideal of $B'$. Now $|B'| \leq |T'| = |T| = |T/M| = |\overline{T}/\overline{M}| = |R/(R \cap \overline{M})| = |B'/M_{B'}|$, and it follows that $|B'| = |B'/M_{B'}|$ and that $B'$ is uncountable.

 Since $R \cap \overline{Q}_1 = R \cap \overline{Q}_2 = (0)$, we have that $$(\overline{Q}^e_1, y_1, \ldots ,y_n) \cap R[[y_1, \ldots ,y_n]] = (\overline{Q}^e_2, y_1, \ldots ,y_n) \cap R[[y_1, \ldots ,y_n]] = (y_1, \ldots ,y_n)R[[y_1, \ldots ,y_n]]$$ and so letting $Q'_1 = (\overline{Q}^e_1, y_1, \ldots ,y_n)/I\overline{T}[[y_1, \ldots, y_n]]$ and $Q'_2 = (\overline{Q}^e_2, y_1, \ldots ,y_n)/I\overline{T}[[y_1, \ldots, y_n]]$ we have $$B' \cap Q'_1  = B' \cap Q'_2 = \frac{(y_1, \ldots ,y_n)R[[y_1, \ldots ,y_n]]}{IR[[y_1, \ldots ,y_n]]}.$$

Since $R$ is a domain, $$(0) \subsetneq (y_1) \subsetneq (y_1, y_2) \subsetneq \cdots \subsetneq (y_1, \ldots ,y_n)$$ is a chain of prime ideals of $R[[y_1, \ldots ,y_n]]$ of length $n$. By the generalized principal ideal theorem, the dimension of $R[[y_1, \ldots ,y_n]]_{(y_1, \ldots ,y_n)}$ is $n$. As the maximal ideal of $R[[y_1, \ldots ,y_n]]_{(y_1, \ldots ,y_n)}$ is generated by $n$ elements, the ring $R[[y_1, \ldots ,y_n]]_{(y_1, \ldots ,y_n)}$ is a regular local ring and thus is catenary. It follows that $B'_{B' \cap Q'_1}$ is catenary.

 Now, $$\frac{T'}{Q'_1 \cap Q'_2} \cong \overline{T}$$ and $$\frac{B'}{B' \cap Q'_1} \cong R.$$ Since $g$ is a bijection when restricted to prime ideals of positive height, so is the map $$f' : \Spec\left(\frac{T'}{Q'_1 \cap Q'_2}\right) \longrightarrow \Spec\left(\frac{B'}{B' \cap Q'_1}\right)$$ given by intersection.

 By Lemma \ref{lemma: two to one correspondence alt}, the minimal prime ideals of $T'$ are given by 
        \begin{equation*}
            \frac{(\overline{Q}_1^e, \mathfrak{q}_1\overline{T}[[y_1,\ldots ,y_n]])}{I\overline{T}[[y_1,\ldots ,y_n]]}, \ldots, \frac{(\overline{Q}_1^e, \mathfrak{q}_k\overline{T}[[y_1,\ldots ,y_n]])}{I\overline{T}[[y_1,\ldots ,y_n]]}, \frac{(\overline{Q}_2^e, \mathfrak{q}_1\overline{T}[[y_1,\ldots ,y_n]])}{I\overline{T}[[y_1,\ldots ,y_n]]}, \ldots, \frac{(\overline{Q}_2^e, \mathfrak{q}_k\overline{T}[[y_1,\ldots ,y_n]])}{I\overline{T}[[y_1,\ldots ,y_n]]},
        \end{equation*}
    and the minimal prime ideals of $B'$ are given by 
        \begin{equation*}
            \frac{\mathfrak{q}_1R[[y_1, \ldots, y_n]]}{IR[[y_1, \ldots, y_n]]}, \ldots, \frac{\mathfrak{q}_kR[[y_1, \ldots, y_n]]}{IR[[y_1, \ldots, y_n]]}.
        \end{equation*}
Since $\mathfrak{q}_i$ is an ideal of $\Q[[y_1, \ldots y_n]]$, $$\frac{\mathfrak{q}_iR[[y_1, \ldots, y_n]]}{IR[[y_1, \ldots, y_n]]} \subseteq \frac{(y_1, \ldots ,y_n)R[[y_1, \ldots ,y_n]]}{IR[[y_1, \ldots ,y_n]]}$$ for all $i = 1,2, \ldots ,k$. By Remark \ref{remark: two to one correspondence}, there is a bijection between the minimal prime ideals of $T'$ contained in $Q'_1$ and the minimal prime ideals of $B'$ contained in $B' 
\cap Q'_1$ where the ideal $(\overline{Q}_1^e, \mathfrak{q}_i\overline{T}[[y_1,\ldots ,y_n]])/I\overline{T}[[y_1,\ldots ,y_n]]$ corresponds to the ideal $\mathfrak{q}_iR[[y_1, \ldots, y_n]]/IR[[y_1, \ldots, y_n]]$.  In other words, there is a bijection between $\Min T'_{Q'_1}$ and $\Min B'_{B' \cap Q'_1}$. We show that the bijection is coheight preserving. First note that, because $T_{Q_1}$ is catenary, the isomorphism $\phi$ gives that $T'_{Q'_1}$ is catenary.

Now fix $i \in \{1,2, \ldots ,k\}$.  If $$\frac{\mathfrak{q}_iR[[y_1, \ldots, y_n]]}{IR[[y_1, \ldots, y_n]]} = \frac{(y_1, \ldots ,y_n)R[[y_1, \ldots, y_n]]}{IR[[y_1, \ldots, y_n]]}$$ then $$\frac{(\overline{Q}_1^e, \mathfrak{q}_i\overline{T}[[y_1,\ldots ,y_n]])}{I\overline{T}[[y_1,\ldots ,y_n]]} = Q'_1.$$
If $$\frac{\mathfrak{q}_iR[[y_1, \ldots, y_n]]}{IR[[y_1, \ldots, y_n]]} \neq \frac{(y_1, \ldots ,y_n)R[[y_1, \ldots, y_n]]}{IR[[y_1, \ldots, y_n]]}$$ then let $z_1 \in (y_1, \ldots ,y_n)R[[y_1, \ldots, y_n]]/IR[[y_1, \ldots, y_n]]$ with $z_1 \not\in \mathfrak{q}_iR[[y_1, \ldots, y_n]]/IR[[y_1, \ldots, y_n]]$.
Let $J_0 = (\overline{Q}_1^e, \mathfrak{q}_i\overline{T}[[y_1,\ldots ,y_n]])/I\overline{T}[[y_1,\ldots ,y_n]]$ and let $J_1$ be a minimal prime ideal over $(J_0, z_1)$ contained in $Q'_1$. Then in the domain $T'/J_0$ the ideal $(J_0,z_1)/J_0$ is principal, and so $J_1/J_0$ is a height one prime ideal. It follows that the chain $J_0 \subsetneq J_1$ is saturated. Note that $B' \cap J_0 \subsetneq B' \cap J_1$. 

If $B' \cap J_1 = (y_1, \ldots ,y_n)R[[y_1, \ldots, y_n]]/IR[[y_1, \ldots, y_n]]$ then $Q'_1 = (B' \cap J_1)T' \subseteq J_1$ and so $J_1 = Q'_1$. If $B' \cap J_1 \neq (y_1, \ldots ,y_n)R[[y_1, \ldots, y_n]]/IR[[y_1, \ldots, y_n]]$ then let $$z_2 \in \frac{(y_1, \ldots ,y_n)R[[y_1, \ldots, y_n]]}{IR[[y_1, \ldots, y_n]]}$$ with $z_2 \not\in B' \cap J_1$. Let $J_2$ be a minimal prime ideal over $(J_1, z_2)$ contained in $Q'_1$. Then in the domain $T'/J_1$ the ideal $(J_1,z_2)/J_1$ is principal, and so $J_2/J_1$ is a height one prime ideal. It follows that the chain $J_1 \subsetneq J_2$ is saturated. Note that $B' \cap J_1 \subsetneq B' \cap J_2$. 

Continue this process to find a saturated chain of prime ideals of $T'$,
$$J_0 \subsetneq J_1 \subsetneq \cdots \subsetneq J_s = Q'_1$$ such that $$\frac{\mathfrak{q}_iR[[y_1, \ldots, y_n]]}{IR[[y_1, \ldots, y_n]]} = B' \cap J_0 \subsetneq B' \cap J_1 \subsetneq \cdots \subsetneq B' \cap J_s = \frac{(y_1, \ldots ,y_n)R[[y_1, \ldots, y_n]]}{IR[[y_1, \ldots, y_n]]}$$ is a chain of prime ideals of $B'$. Suppose that this chain of prime ideals of $B'$ is not saturated. Then there are prime ideals $P_0, P_1, \ldots ,P_{s + 1}$ of $B'$ satisfying
$$\frac{\mathfrak{q}_iR[[y_1, \ldots, y_n]]}{IR[[y_1, \ldots, y_n]]} = P_0 \subsetneq P_1 \subsetneq \cdots \subsetneq P_{s + 1} = \frac{(y_1, \ldots ,y_n)R[[y_1, \ldots, y_n]]}{IR[[y_1, \ldots, y_n]]}.$$ By Proposition \ref{proposition: folklore going down}, and going down property holds between $T'$ and $B'$. Therefore, there are prime ideals $P'_0, P'_1, \ldots ,P'_{s + 1}$ of $T'$ satisfying
$$P'_0 \subsetneq P'_1 \subsetneq \cdots P'_s \subsetneq P'_{s + 1} = Q'_1$$
with $B' \cap P'_{\ell} = P_{\ell}$ for all $\ell \in \{0,1, \ldots , s + 1\}$. Now, $P'_0$ contains a minimal prime ideal $P''$ of $T'$, and we have $$B' \cap P'' \subseteq B' \cap P'_0 = P_0 = \frac{\mathfrak{q}_iR[[y_1, \ldots, y_n]]}{IR[[y_1, \ldots, y_n]]}$$ and so
$$B' \cap P'' = \frac{\mathfrak{q}_iR[[y_1, \ldots, y_n]]}{IR[[y_1, \ldots, y_n]]}.$$
Because of the correspondence between the minimal prime ideals of $T'$ contained in $Q'_1$ and the minimal prime ideals of $B'$ contained in $B' \cap Q'_1$, we have that $P'' = J_0.$ It follows that there is a saturated chain of prime ideals of $T'$ starting at $J_0$ and ending at $Q'_1$ of length longer than $s$. This violates that $T'_{Q'_1}$ is catenary. Thus the chain
$$\frac{\mathfrak{q}_iR[[y_1, \ldots, y_n]]}{IR[[y_1, \ldots, y_n]]} = B' \cap J_0 \subsetneq B' \cap J_1 \subsetneq \cdots \subsetneq B' \cap J_s = B' \cap Q'_1$$
is saturated.

Because $T'_{Q'_1}$ is catenary, the coheight of $J_0$ in $T'_{Q'_1}$ is $s$. Similarly, since $B'_{B' \cap Q'_1}$ is catenary, the coheight of $\mathfrak{q}_iR[[y_1, \ldots, y_n]]/IR[[y_1, \ldots, y_n]]$ in $B'$ is $s$. It follows that there is a coheight preserving bijection between $\Min T'_{Q'_1}$ and $\Min B'_{B' \cap Q'_1}$. A similar argument shows that there is a coheight preserving bijection between $\Min T'_{Q'_2}$ and $\Min B'_{B' \cap Q'_2} = \Min B'_{B' \cap Q'_1}$.

Now suppose $T$ is quasi-excellent. Since a quotient ring of a quasi-excellent ring is also quasi-excellent, $\overline{T}$ is quasi-excellent. By Proposition \ref{quasiexcellent}, $R$ is quasi-excellent. Noting that, if $A$ is a quasi-excellent local ring then so is $A[[y]]$, it follows that $B'$ is quasi-excellent.

Finally, observe that $B = \phi^{-1}(B')$ satisfies the conclusions of the theorem.
\end{proof}

The conclusions of Theorem \ref{theorem: generalized gluing alt} allow us to apply it multiple times. We illustrate with an example.

\begin{example}\label{secondexample}
        Let
        $T = \C[[x_1, x_2, x_3, x_4, y_1,y_2,y_3,y_4,z_1,z_2,z_3,z_4]]/J$ where $$J = (x_1, y_1, z_1) \cap (x_1, y_1,z_2,z_3) \cap (x_1, y_2,y_3, z_1) \cap (x_1, y_2,y_3,z_2,z_3) \, \cap $$ 
        \vspace{-1.25cm}
        $$(x_2,x_3, y_1, z_1) \cap (x_2,x_3, y_1,z_2,z_3) \cap (x_2,x_3, y_2,y_3, z_1) \cap (x_2,x_3, y_2,y_3,z_2,z_3) $$
        and let $Q_1 = (x_1, y_1,y_2,y_3,y_4,z_1,z_2,z_3,z_4)/J$ and $Q_2 = (x_2, x_3,y_1,y_2,y_3,y_4,z_1,z_2,z_3,z_4)/J$. 
Then $$\overline{T} = \frac{T}{Q_1 \cap Q_2} \cong \frac{\C[[x_1, x_2,x_3, x_4]]}{(x_1) \cap (x_2, x_3)}.$$
Now,$$\frac{\overline{T}[[y_1, y_2,y_3, y_4,z_1,z_2,z_3,z_4]]}{(y_1,z_1) \cap (y_1,z_2,z_3) \cap (y_2,y_3,z_1) \cap (y_2,y_3,z_2,z_3)} \cong T,$$
and the conditions for Theorem \ref{theorem: generalized gluing alt} are met. Hence there is a local subring $T_1$ of $T$ satisfying the conclusions of Theorem \ref{theorem: generalized gluing alt}. In particular, $$T_1 \cong \frac{R[[y_1, y_2,y_3, y_4,z_1,z_2,z_3,z_4]]}{(y_1,z_1) \cap (y_1,z_2,z_3) \cap (y_2,y_3,z_1) \cap (y_2,y_3,z_2,z_3)}$$ for some domain $R$ containing the rationals, and $T_1$ satisfies the hypotheses of the theorem. Moreover, since $T$ is excellent, $T_1$ is quasi-excellent. Apply Theorem \ref{theorem: generalized gluing alt} again with $$Q_1 = \frac{(y_1, z_1,z_2,z_3,z_4)}{(y_1,z_1) \cap (y_1,z_2,z_3) \cap (y_2,y_3,z_1) \cap  (y_2,y_3,z_2,z_3)}$$ and
$$Q_2 = \frac{(y_2,y_3, z_1,z_2,z_3,z_4)}{(y_1,z_1) \cap (y_1,z_2,z_3) \cap (y_2,y_3,z_1) \cap (y_2,y_3,z_2,z_3)}$$ to get a quasi-excellent local subring $B$ of $T_1$ satisfying the conclusions of Theorem \ref{theorem: generalized gluing alt}. Note that verifying the details for this application of Theorem \ref{theorem: generalized gluing alt} is very similar to the details given in Example \ref{mainexample}. Figure 3 shows a partially ordered set that is embedded in the prime spectrum of the ring $B$. We can now apply Theorem \ref{theorem: GluingTheorem} to the ring $B$ to get a quasi-excellent local ring $B_1$ where the prime spectrum of $B_1$ is the same as the prime spectrum of $B$ except that, for the prime spectrum of $B_1$, the two minimal prime ideals of $B$ are glued together. So, for this complete local ring $T$, we have answered Question \ref{question: easier classication question} from the Introduction for two different partially ordered sets; the one pictured in Figure 3 and the one obtained from Figure 3 by gluing the two minimal elements.
\end{example}

\begin{center}

\begin{figure}

\begin{tikzpicture}

%NODES
\node[main] (1U) {};
\node[main] (4U) [below right of=1U] {};
\node[main] (5U) [below of=4U] {};
\node[main] (6U) [below left of= 5U] {};
\node[main] (2U) [below left of=4U, left of=4U] {};

%\node[main] (1) [below of=6U] {};
\node[main] (4) [below right of=6U] {};
\node[main] (5) [below of=4] {};
\node[main] (6) [below left of= 5] {};
\node[main] (2) [below left of=4, left of=4] {};

\node[main] (8) [below right of=6] {};
\node[main] (9) [below of=8] {};
%\node[main] (10) [below left of= 9] {10};
\node[main] (7) [below left of=8, left of=8] {};
\node[main] (10L) [below of = 7] {};
\node[main] (10R) [below of =9] {};

%DRAWING
\draw[line width=0.25mm] (1U) -- (2U) -- (6U);
\draw[line width=0.25mm] (1U) -- (4U) -- (5U) -- (6U);
\draw[line width=0.25mm] (6U) -- (4) -- (5) -- (6) -- (8) -- (9) -- (10R);
\draw[line width=0.25mm] (6U) -- (2) -- (6) -- (7) -- (10L);
\end{tikzpicture}
\caption{Partial $\Spec(B)$}
\label{DrawingOfX}
\end{figure}

\end{center}

\end{document}